\definecolor{citegreen}{rgb}{0,0.6,0}
\definecolor{refred}{rgb}{0.8,0,0}
\title{Dynamical stability and instability of Ricci-flat metrics}
\author{Robert Haslhofer and Reto M\"{u}ller}
\date{}
\providecommand{\abs}[1]{\lvert #1\rvert}%
\providecommand{\norm}[1]{\lVert #1\rVert}%
\DeclareMathOperator{\Hess}{Hess}%
\DeclareMathOperator{\divop}{div}%
\DeclareMathOperator{\id}{id}%
\DeclareMathOperator{\Rm}{Rm}%
\DeclareMathOperator{\Rc}{Rc}%
\newcommand{\RR}{\mathbb{R}}%
\newcommand{\eps}{\varepsilon}%
\newcommand{\Lap}{\triangle}%
\newcommand{\D}{\nabla}%
\newcommand{\sS}{\mathcal{S}}%
\newcommand{\U}{\mathcal{U}}%
\newcommand{\V}{\mathcal{V}}%
\newcommand{\B}{\mathcal{B}}%
\newcommand{\dt}{\partial_t}%
\newtheoremstyle{break}%
  {12pt}%
  {16pt}%
  {\itshape}%
  {}%
  {\bfseries}%
  {}%
  {\newline}%
  {\thmname{#1}\thmnumber{ #2}\thmnote{ \normalfont{(#3)}}}%
\theoremstyle{definition}%
\theoremstyle{remark}%
\newtheorem*{rem}{Remark}%
\theoremstyle{break}%
\newtheorem{lemma}{Lemma}[section]%
\newtheorem{thm}[lemma]{Theorem}%
\numberwithin{equation}{section}%
\begin{document}%
\maketitle%
\pagenumbering{arabic}%
\begin{abstract}
In this short article, we improve the dynamical stability and instability results for Ricci-flat metrics under Ricci flow proved by Sesum \cite{Ses06} and Haslhofer \cite{Has1}, getting rid of the integrability assumption.
\end{abstract}

\section{Introduction}

Let $M$ be a compact manifold. A Ricci-flat metric on $M$ is a Riemannian metric with vanishing Ricci curvature.
Ricci-flat metrics are fairly hard to construct, and their properties are of great interest, see \cite{Bes,Joy,LB} for extensive information.
They are the critical points of the Einstein-Hilbert functional, $\mathcal{E}(g)=\int_{M} R_g dV_g$, the fixed points of Hamilton's Ricci flow \cite{Ham_surv}, 
\begin{equation}\label{eq.Ricciflow}
\dt g(t)=-2\Rc_{g(t)},
\end{equation}
and the critical points of Perelman's $\lambda$-functional \cite{Per1},
\begin{equation}\label{deflambda}
\lambda(g)=\inf_{\substack{f\in C^\infty(M)\\ \int_M e^{-f}dV_g=1}}\int_M \big(R_g+\abs{\D f}_g^2\big)e^{-f}dV_g.
\end{equation}
In this article, we are concerned with the \emph{stability properties} of Ricci-flat metrics under Ricci flow. This stability problem has been studied previously by Sesum \cite{Ses06} and Haslhofer \cite{Has1}, generalizing in turn previous work by Guenther-Isenberg-Knopf \cite{GIK}. The main theorems established there are the \emph{dynamical stability theorem} \cite[Thm. 3]{Ses06}, \cite[Thm. E]{Has1}, and the \emph{dynamical instability theorem} \cite[Thm. F]{Has1}. The dynamical stability theorem says that if a Ricci-flat metric is a local maximizer of $\lambda$ and if all its infinitesimal Ricci-flat deformations are integrable, then every Ricci flow starting close to it exists for all times and converges (modulo diffeomorphisms) to a nearby Ricci-flat metric.
The dynamical instability theorem says that if a Ricci-flat metric is not a local maximizer of $\lambda$ and if all its infinitesimal Ricci-flat deformations are integrable, then there exists a nontrivial ancient Ricci flow emerging from it.
However, the integrability assumption is rather strong and it is natural to ask whether or not this assumption can be weakened or even removed. In the present article, we prove that it is indeed possible to completely remove this integrability assumption imposed by Sesum and Haslhofer, i.e.~we prove the following.

\begin{thm}[Dynamical stability]\label{thm.stability}
Let $(M,\hat{g})$ be a compact Ricci-flat manifold. If $\hat{g}$ is a local maximizer of $\lambda$, then for every $C^{k,\alpha}$-neighborhood $\U$ of $\hat{g}$ $(k\geq 2)$, there exists a $C^{k,\alpha}$-neighborhood $\V\subset\U$ such that the Ricci flow starting at any metric in $\V$ exists for all times and converges (modulo diffeomorphisms) to a Ricci-flat metric in $\U$.
\end{thm}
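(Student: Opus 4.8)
The plan is to bypass the integrability-based arguments of Sesum and Haslhofer with a \emph{\L{}ojasiewicz--Simon gradient inequality} for $\lambda$, which requires no structural assumption on the set of Ricci-flat metrics near $\hat g$. I would first record the gradient-flow picture. Writing $\lambda(g)$ as the lowest eigenvalue of the Schr\"odinger operator $-4\Lap_g+R_g$, with first eigenfunction $e^{-f/2}$, Perelman's first variation yields
$$\grad\lambda(g)=\Rc_g+\Hess_g f,\qquad \tfrac{d}{dt}\lambda(g(t))=2\int_M\abs{\Rc+\Hess f}_g^2\,e^{-f}\,dV_g\geq 0.$$
Since $\Hess f=\tfrac12\mathcal{L}_{\D f}g$, the flow $\dt g=-2(\Rc_g+\Hess_g f)$ is the Ricci flow modified by the diffeomorphisms generated by $2\D f$, i.e.~it is the Ricci flow modulo diffeomorphisms. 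Because $\hat g$ is a local maximizer, $\lambda(g)\leq\lambda(\hat g)$ on a neighborhood, so $u(t):=\lambda(\hat g)-\lambda(g(t))\geq 0$ is monotonically nonincreasing along this flow.

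The heart of the argument is the inequality
$$\abs{\lambda(g)-\lambda(\hat g)}^{1-\theta}\leq C\,\Norm{\Rc_g+\Hess_g f}_{L^2},$$
valid for all $g$ in a fixed $C^{k,\alpha}$-neighborhood of $\hat g$, with some $\theta\in(0,\tfrac12]$ and $C<\infty$. I would derive it from the abstract \L{}ojasiewicz--Simon theorem for analytic functionals on Banach spaces, verifying its two hypotheses. First, $g\mapsto\lambda(g)$ is \emph{analytic}: as $\lambda(g)$ is the bottom of the spectrum of $-4\Lap_g+R_g$ and hence a simple eigenvalue, analyticity follows from Kato's analytic perturbation theory, and the minimizer $f_g$, and therefore the gradient map $g\mapsto\Rc_g+\Hess_g f_g$, depends analytically on $g$ as well. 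Second, the Hessian of $\lambda$ at $\hat g$ must be \emph{Fredholm}. Here one uses that, modulo the infinite-dimensional kernel arising from diffeomorphism invariance and from the conformal/trace directions, the second variation is governed by a constant multiple of the Lichnerowicz Laplacian on transverse-traceless tensors, which is elliptic and self-adjoint; fixing the gauge by a slice transverse to the diffeomorphism orbits turns $\grad\lambda$ into an elliptic, hence Fredholm, operator on the slice, exactly the structural input the abstract theorem demands. Note that the local-maximizer hypothesis is \emph{not} needed here---the \L{}ojasiewicz inequality holds at any critical point---but only later, to guarantee $u\geq 0$.

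With the inequality in hand, the remaining steps are standard. Along the flow, using $\dt g=-2\grad\lambda$ and (near $\hat g$) the uniform comparability of the weighted and unweighted $L^2$-norms,
$$-\tfrac{d}{dt}u^\theta=\theta\,u^{\theta-1}\,\tfrac{d}{dt}\lambda(g(t))=2\theta\,u^{\theta-1}\Norm{\grad\lambda}_{L^2}^2\geq 2\theta C^{-1}\Norm{\grad\lambda}_{L^2}=\theta C^{-1}\Norm{\dt g}_{L^2},$$
so integrating bounds the total $L^2$-length $\int_0^T\Norm{\dt g}_{L^2}\,dt\leq\tfrac{C}{\theta}\,u(0)^\theta$ uniformly in $T$, with $u(0)$ small when the initial metric is close to $\hat g$. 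A continuity argument then shows that for $\V$ small enough the flow cannot escape the neighborhood; combining the length bound with Shi-type derivative estimates and parabolic interpolation keeps the curvature bounded, rules out finite-time singularities, yields long-time existence, and upgrades $L^2$-convergence to $C^{k,\alpha}$-convergence to a limit $g_\infty\in\U$. Since $\grad\lambda(g_\infty)=0$, this $g_\infty$ is Ricci-flat. Finally, the diffeomorphisms generated by $2\D f$ that relate the modified flow to the genuine Ricci flow also converge, because $\D f\to 0$, giving convergence of the original Ricci flow modulo diffeomorphisms as claimed.

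The main obstacle I anticipate is the \L{}ojasiewicz--Simon step, and within it the Fredholm verification in the presence of gauge invariance: one must carefully fix a slice for the diffeomorphism action, confirm that the implicitly defined minimizer $f_g$ depends analytically on $g$ so that the gradient map is analytic, and identify the degeneracies of the Hessian as precisely the gauge and conformal directions together with the finite-dimensional transverse-traceless kernel, so that ellipticity modulo gauge indeed delivers the Fredholm property. Once this is secured, the monotonicity, the length estimate, and the regularity bootstrap proceed routinely.
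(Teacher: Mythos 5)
Your proposal is correct and follows essentially the same route as the paper: a {\L}ojasiewicz--Simon inequality for $\lambda$ obtained via the Ebin--Palais slice, analyticity of $\lambda$ as a simple eigenvalue of $-4\Lap_g+R_g$, and Fredholmness of the elliptic linearization $\tfrac12\Lap_{\hat g}+\Rm_{\hat g}$ on divergence-free tensors, then exploited along the modified flow $\dt g=-2(\Rc_g+\Hess_g f_g)$ together with Shi-type estimates and interpolation. Two small points where the paper is tidier: the gauging diffeomorphisms are generated by $-\D f$, since $-2\Hess_g f=-\mathcal{L}_{\D f}\,g$ (not $2\D f$; and only the gauge directions, not the conformal ones, degenerate the Hessian), and rather than first deriving an $L^2$-length bound and upgrading afterwards, the paper inserts the interpolation $\norm{\dt\tilde g}_{C^{k,\alpha}}\leq C\norm{\dt\tilde g}_{L^2}^{1-\eta}$ directly into the {\L}ojasiewicz differential inequality (with modified exponent $\sigma=\theta-\eta+\theta\eta>0$) so as to bound the $C^{k,\alpha}$-length of the path at once, which is exactly what keeps the flow inside the $C^{k,\alpha}$-ball where the inequality is valid.
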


\begin{thm}[Dynamical instability]\label{thm.instability}
Let $(M,\hat{g})$ be a compact Ricci-flat manifold. If $\hat{g}$ is not a local maximizer of $\lambda$, then there exists a nontrivial ancient Ricci flow $\{g(t)\}_{t\in(-\infty,0]}$ that converges (modulo diffeomorphisms) to $\hat{g}$ for $t\to -\infty$.
\end{thm}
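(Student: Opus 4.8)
The plan is to play off the monotonicity of Perelman's $\lambda$-functional against a Lojasiewicz--Simon gradient inequality for $\lambda$ near $\hat{g}$; this inequality is what replaces the integrability assumption of Sesum and Haslhofer. First I would fix the gauge. Because $\lambda$ is diffeomorphism invariant, its critical set contains the entire orbit of $\hat{g}$, so I restrict to a slice $\sS$ transverse to this orbit (e.g.\ the divergence-free perturbations through $\hat{g}$, via Ebin's slice theorem). For $g\in\sS$ near $\hat{g}$ the minimizer $f=f_g$ in \eqref{deflambda} is unique and depends analytically on $g$, so $\lambda$ is a real-analytic function on $\sS$ with $\lambda(\hat{g})=0$, whose gradient is a constant multiple of $\Rc_g+\Hess_g f_g$. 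Up to diffeomorphisms and a positive constant, the Ricci flow \eqref{eq.Ricciflow} becomes the gradient flow $\dt g=\grad\lambda$ on $\sS$, along which Perelman's computation gives $\tfrac{d}{dt}\lambda(g(t))=\norm{\grad\lambda}^2\geq0$.

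The technical heart is the Lojasiewicz--Simon inequality: there exist $C>0$, $\theta\in(0,\tfrac12]$ and a neighborhood of $\hat{g}$ in $\sS$ on which
\[
\abs{\lambda(g)}^{1-\theta}\leq C\,\norm{\grad\lambda(g)}.
\]
I would deduce this from Simon's abstract Lojasiewicz--Simon theorem, whose hypotheses are exactly the analyticity of $\lambda$ established above together with the Fredholm property of the linearization of $\grad\lambda$ at $\hat{g}$. The latter holds since, modulo the gauge fixing, this linearization is the self-adjoint elliptic stability operator acting on transverse-traceless tensors, which has finite-dimensional kernel. The decisive feature is that the inequality requires no hypothesis on the structure of the critical set, and in particular says nothing about whether infinitesimal Ricci-flat deformations integrate---this is precisely how the integrability assumption is removed.

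With these tools I construct the ancient flow by a limiting argument. As $\hat{g}$ is not a local maximizer, there is a sequence $h_j\to\hat{g}$ in $\sS$ with $\lambda(h_j)>0=\lambda(\hat{g})$. I run the gauge-fixed Ricci flow from each $h_j$ and let $\tau_j$ be the first exit time from a fixed small ball $\B_\rho(\hat{g})$. The flow must exit, since $\lambda(g(t))\geq\lambda(h_j)>0$ stays bounded away from $0$, whereas a trajectory trapped in $\B_\rho(\hat{g})$ would, by the Lojasiewicz inequality, have finite length and converge to a critical point, i.e.\ a Ricci-flat metric with $\lambda=0$, a contradiction. Continuous dependence on initial data forces $\tau_j\to\infty$ as $h_j\to\hat{g}$. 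Time-translating, I set $g_j(t)=g(\tau_j+t)$ for $t\in[-\tau_j,0]$, so $g_j(t)\in\overline{\B_\rho(\hat{g})}$ and $g_j(0)\in\partial\B_\rho(\hat{g})$. Interior (Shi-type) estimates, available because the flows remain in the bounded region with $\lambda$ uniformly bounded, yield uniform curvature bounds, and I extract a subsequential limit $\{g_\infty(t)\}_{t\in(-\infty,0]}$, a nontrivial ancient Ricci flow modulo diffeomorphism---nontrivial because $g_\infty(0)\in\partial\B_\rho(\hat{g})$ is bounded away from $\hat{g}$.

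Finally I analyze the backward limit. The limit flow stays in $\overline{\B_\rho(\hat{g})}$ for all $t\leq0$, and since $\lambda$ is monotone with infimum attained at a critical point, $\lambda(g_\infty(t))\to0$ as $t\to-\infty$. Inserting $\abs{\lambda}^{1-\theta}\leq C\norm{\grad\lambda}$ into $\tfrac{d}{dt}\lambda=\norm{\grad\lambda}^2$ gives the differential inequality $\norm{\dt g_\infty}\leq\tfrac{C}{\theta}\tfrac{d}{dt}\big(\lambda^{\theta}\big)$, so that $\int_{-\infty}^{0}\norm{\dt g_\infty}\,dt<\infty$ and $\mathrm{dist}(g_\infty(t),g_{-\infty})\leq\tfrac{C}{\theta}\lambda(g_\infty(t))^{\theta}\to0$; thus the backward trajectory has finite length and converges modulo diffeomorphism to a single Ricci-flat metric $g_{-\infty}$. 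The main obstacle, beyond proving the Lojasiewicz--Simon inequality itself, is to identify $g_{-\infty}$ with $\hat{g}$ rather than some other nearby Ricci-flat metric: in the nondegenerate case the stability operator has a positive eigenvalue and a classical unstable-manifold construction produces a flow asymptotic to $\hat{g}$, but in the degenerate case---exactly the one the integrability hypothesis used to exclude---one must instead exploit the analytic structure near $\hat{g}$ together with a careful choice of the $h_j$ along unstable directions in order to center the whole construction at $\hat{g}$.
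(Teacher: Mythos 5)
Your construction matches the paper's up to the extraction of the ancient limit (sequence $h_j\to\hat{g}$ with $\lambda(h_j)>0$, gauge-fixed flow, first exit time $\tau_j\to\infty$, time-shift, compactness), but there is a genuine gap at exactly the point you yourself flag as ``the main obstacle'': identifying the backward limit with $\hat{g}$, and, bound up with it, nontriviality. Your nontriviality argument ($g_\infty(0)\in\partial\B_\rho(\hat{g})$) only shows $g_\infty(0)\neq\hat{g}$; in the nonintegrable setting the critical set near $\hat{g}$ is not understood, so a priori the limit could be a \emph{stationary} flow sitting at some other nearby Ricci-flat metric, which is a trivial ancient flow and does not converge back to $\hat{g}$. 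Likewise your backward analysis only yields convergence to \emph{some} Ricci-flat $g_{-\infty}$. The unstable-manifold construction you propose as a fix is exactly what fails in the degenerate case the theorem is about, and the ``careful choice of the $h_j$ along unstable directions'' is not supplied and is not obviously available; as written the proof does not close.

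The paper resolves both issues with one quantitative mechanism you did not use: applying the {\L}ojasiewicz--Simon inequality \emph{at the exit time}. Since $\lambda>0$ is increasing along these flows, the {\L}ojasiewicz length estimate bounds the distance traveled between times $s<t$ by $\tfrac{C}{\sigma}\big(\lambda(t)^{\sigma}-\lambda(s)^{\sigma}\big)\leq C\lambda(t)^{\sigma}$, i.e.\ by $\lambda$ at the \emph{later} endpoint. Because the flows start at $\tilde{g}_i(1)\to\hat{g}$, the exit condition gives $\eps=\norm{\tilde{g}_i(t_i)-\hat{g}}_{C^{\ell,\alpha}}\leq C\lambda(\tilde{g}_i(t_i))^{\sigma}$ (up to a small correction), hence the uniform lower bound $\lambda(\tilde{g}^s_i(0))\geq c>0$; this passes to the limit and certifies nontriviality, since compact stationary points of the modified flow satisfy $\Rc+\Hess f=0$ and thus are Ricci-flat with $\lambda=0$. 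For the backward limit, the same length bound gives $\norm{\tilde{g}^s_i(T_i)-\tilde{g}^s_i(t)}\leq C\lambda(g_i(t+t_i))^{\sigma}$, and Perelman's inequality $\tfrac{d\lambda}{dt}\geq\tfrac{2}{n}\lambda^2$ together with the uniform upper bound on $\lambda$ at shifted time $0$ forces $\lambda(g_i(t+t_i))\leq(\Lambda^{-1}+\tfrac{2}{n}\abs{t})^{-1}$, uniformly in $i$; a triangle inequality using $\tilde{g}^s_i(T_i)\to\hat{g}$ then anchors the whole backward trajectory to $\hat{g}$ itself, with no unstable-manifold theory and no special choice of the $h_j$. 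Two secondary points: the paper gauges by DeTurck on $[0,1]$ and then by the $f$-gauge \eqref{eq.pullbackflow}, rather than by your slice-projected gradient flow, whose well-posedness and Shi-type estimates you would have to establish separately; and your ``finite length'' conclusion needs the interpolation step \eqref{interpol} between $L^2$ and $C^{k,\alpha}$ norms (using the derivative bounds \eqref{anest2}, \eqref{anest3}) to come out in a norm strong enough for the compactness and convergence claims.
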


Theorems \ref{thm.stability} and \ref{thm.instability} describe the dynamical behavior of the Ricci flow near a given Ricci-flat metric. In fact, they show that dynamical stability and instability are characterized exactly by the local maximizing property of $\lambda$, observing whether or not $\lambda\leq 0$ in some $C^{k,\alpha}$-neighborhood of $\hat{g}$ ($k\geq 2$). Indeed, the converse implications follow immediately from Perelman's monotonicity formula \cite{Per1}, i.e. if the conclusion of Theorem \ref{thm.stability} holds, then $\hat{g}$ it is a local maximizer of $\lambda$; if the conclusion of Theorem \ref{thm.instability} holds, then $\hat{g}$ is not a local maximizer of $\lambda$.

\begin{rem}
Another related notion is \emph{linear stability}, meaning that all eigenvalues of the Lichnerowicz Laplacian $L_{\hat{g}}=\Lap_{\hat{g}}+2\Rm_{\hat{g}}$ are nonpositive. If $\hat{g}$ is a local maximizer of $\lambda$, then it is linearly stable \cite[Thm. 1.1]{CHI04}. If $\hat{g}$ is linearly stable and integrable, then it is a local maximizer of $\lambda$, c.f. \cite[Thm. A]{Has1}.
\end{rem}

In addition to applying to the more general nonintegrable case, the proofs that we give here are substantially shorter than the previous arguments from \cite{Ses06,Has1}. In outline, we start by proving the following {\L}ojasiewicz-Simon inequality for Perelman's $\lambda$-functional, which generalizes \cite[Thm. B]{Has1} to the nonintegrable case:

\begin{thm}[{\L}ojasiewicz-Simon inequality for $\lambda$]\label{thm.LS}
Let $(M,\hat{g})$ be a closed Ricci-flat manifold. Then there exists a $C^{2,\alpha}$-neighborhood $\U$ of $\hat{g}$ in the space of metrics on $M$ and a $\theta\in (0,\frac{1}{2}]$, such that
\begin{equation}\label{eq.LS}
\norm{\Rc_g+\Hess_g f_g}_{L^2(M,e^{-f_g} dV_g)}\geq \abs{\lambda(g)}^{1-\theta}\, ,
\end{equation}
for all $g\in\U$, where $f_g$ is the minimizer in \eqref{deflambda} realizing $\lambda(g)$.
\end{thm}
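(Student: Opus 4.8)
The plan is to identify the left-hand side of \eqref{eq.LS} with the norm of the gradient of $\lambda$ and then to invoke an infinite-dimensional {\L}ojasiewicz--Simon gradient inequality. First I would record the first variation of $\lambda$: writing $f_g$ for the minimizer realizing $\lambda(g)$ in \eqref{deflambda}, one has
\[
D\lambda_g(h) = -\int_M \scal{\Rc_g + \Hess_g f_g,\, h}\, e^{-f_g}\, dV_g,
\]
so that, with respect to the weighted inner product on symmetric $2$-tensors induced by $e^{-f_g}dV_g$, the gradient of $\lambda$ at $g$ is $-(\Rc_g+\Hess_g f_g)$, whose norm is exactly the left-hand side of \eqref{eq.LS}. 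Since $\hat g$ is Ricci-flat we have $R_{\hat g}=0$ and $f_{\hat g}$ constant, hence $\lambda(\hat g)=0$; thus \eqref{eq.LS} is precisely a {\L}ojasiewicz--Simon gradient inequality $\norm{\nabla\lambda(g)}\geq\abs{\lambda(g)-\lambda(\hat g)}^{1-\theta}$ centered at the critical metric $\hat g$.

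To obtain such an inequality without an integrability assumption, I would apply the abstract {\L}ojasiewicz--Simon theorem for analytic functionals on Banach spaces, whose two hypotheses are: (i) real-analyticity of $\lambda$ together with its gradient map $\mathcal M(g):=\Rc_g+\Hess_g f_g$, viewed as a map between suitable H\"older spaces; and (ii) the Fredholm property of the linearization $D\mathcal M(\hat g)$. The crucial gain over the finite-dimensional reduction used in the integrable case \cite{Has1} is that this theorem needs no smoothness of the set of critical points, and hence no integrability of the infinitesimal Ricci-flat deformations. For hypothesis (i), the essential step is the analytic dependence $g\mapsto f_g$. Setting $w_g=e^{-f_g/2}$, the minimizer corresponds to the positive, normalized first eigenfunction of the Schr\"odinger operator $-4\Lap_g+R_g$, whose bottom eigenvalue $\lambda(g)$ is simple; analytic perturbation theory (equivalently, the analytic implicit function theorem applied to the Euler--Lagrange equation $2\Lap_g f_g-\abs{\D f_g}_g^2+R_g=\lambda(g)$) then gives analyticity of $g\mapsto f_g$ in the relevant H\"older topology, and composition with the manifestly analytic dependence of $\Rc_g$, $\Hess_g$ and $dV_g$ on $g$ yields analyticity of both $\lambda$ and $\mathcal M$.

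The point I expect to be the main obstacle is hypothesis (ii). On the full space of metric variations it fails, because the diffeomorphism invariance of $\lambda$ forces $D\mathcal M(\hat g)$ to annihilate every Lie-derivative direction $\mathcal L_X\hat g$, so its kernel is infinite-dimensional. I would remove this degeneracy by gauge fixing: restrict attention to a slice transverse to the diffeomorphism orbit through $\hat g$, as provided by the Ebin--Palais slice theorem. Since both $\lambda$ and the weighted norm in \eqref{eq.LS} are diffeomorphism invariant, it suffices to prove the inequality for metrics in the slice. On the slice, after substituting the first-order variation of $f$ dictated by the linearized Euler--Lagrange equation, $D\mathcal M(\hat g)$ reduces to a constant multiple of the Lichnerowicz Laplacian $L_{\hat g}=\Lap_{\hat g}+2\Rm_{\hat g}$ acting on transverse-traceless symmetric $2$-tensors, exactly as in the second-variation computation of Cao--Hamilton--Ilmanen \cite{CHI04}. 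This operator is self-adjoint and elliptic with finite-dimensional kernel, hence Fredholm of index zero, establishing (ii) on the slice. With (i) and (ii) in hand, the abstract theorem produces a constant $\theta\in(0,\tfrac12]$ and a $C^{2,\alpha}$-neighborhood on which \eqref{eq.LS} holds. The remaining care is bookkeeping: matching function spaces across the eigenfunction analysis, the slice construction, and the abstract theorem, and correctly isolating the transverse-traceless part by separating off the conformal and scaling directions.
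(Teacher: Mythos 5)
Your proposal follows essentially the same route as the paper: reduce by diffeomorphism invariance to the Ebin--Palais slice, obtain analyticity of $\lambda$ (and of $g\mapsto f_g$) from perturbation theory for the simple bottom eigenvalue of $-4\Lap_g+R_g$, verify that the linearization $\tfrac12\Lap_{\hat g}+\Rm_{\hat g}$ of the gradient $-(\Rc_g+\Hess_g f_g)$ is Fredholm on the slice, and invoke an abstract {\L}ojasiewicz--Simon theorem (the paper uses the Colding--Minicozzi variant \cite[Thm.~6.3]{CM12}, which additionally asks for the Lipschitz-type gradient estimates you subsume under bookkeeping). The one small deviation is your plan to isolate the transverse-traceless part: this is unnecessary, since the linearized operator is already elliptic, hence Fredholm, on the full divergence-free slice $(\ker\divop_{\hat g})_{C^{2,\alpha}}$, so no further separation of conformal and scaling directions is needed.
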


Theorem \ref{thm.LS} can be used as a general tool to study stability and convergence questions for the Ricci flow, and might thus be of independent interest. A key step in our proof of Theorem \ref{thm.stability} is then to modify the Ricci flow by an appropriate family of diffeomorphisms so that we can on the one hand exploit the geometric inequality (\ref{eq.LS}) and on the other hand retain the needed analytic estimates.
This is quite related to the proof by Sun-Wang of the stability of positive K\"{a}hler-Einstein metrics under the normalized K\"{a}hler-Ricci flow \cite{SW10}; the details about handling the diffeomorphism group are somewhat different, however.
The proof of Theorem \ref{thm.instability} is related to the proof of \cite[Thm. F]{Has1}, again with some modifications.

\paragraph{Acknowledgements.} We thank Felix Schulze and Leon Simon for discussions on related issues. RM was financially supported by an Imperial College Junior Research Fellowship.

\section{The {\L}ojasiewicz-Simon inequality for $\lambda$}

\begin{proof}[Proof of Theorem \ref{thm.LS}]
By the Ebin-Palais slice theorem \cite{Ebi70}, there exists a $C^{2,\alpha}$-neighborhood $\U$ of $\hat{g}$ in the space of metrics on $M$ and a $\sigma>0$ such that every metric in $\U$ can be written as the pullback of a metric in the slice
\begin{equation}
\sS_{\hat{g}}:=\{{\hat{g}}+h | h\in\ker\divop_{\hat{g}}, \norm{h}_{C^{2,\alpha}} < \sigma \} .
\end{equation}
Since both sides of (\ref{eq.LS}) are diffeomorphism invariant, it thus sufficies to prove the inequality (\ref{eq.LS}) for the metrics $g$ in the slice $\sS_{\hat{g}}$.\\

The proof is now, with a couple of little tweaks, along the lines of the classical proof due to Leon Simon \cite{Sim83}. For our purpose it is most convenient to apply the variant of Simon's theorem that can be found in Colding-Minicozzi \cite[Thm. 6.3]{CM12}.
To apply this theorem, we have to observe that the restricted functional $\lambda_R:\sS_{\hat{g}}\to \RR$ is analytic and that its gradient and its Hessian satisfy certain properties:\\

(1) As pointed out by Perelman \cite{Per1}, by substituting $w=e^{-f/2}$ in (\ref{deflambda}) one sees that $\lambda(g)$ is the smallest eigenvalue of the Schr{\"o}dinger operator $-4\Lap_g+R_g$. Since the smallest eigenvalue is simple, $\lambda$ depends analytically on $g$, c.f. \cite{RS78}. In particular, the restriced functional $\lambda_R$ is analytic.\\

(2) The $L^2(M,e^{-f_g}dV_g)$-gradient of $\lambda$ is $\nabla\lambda(g)=-(\Rc_g+\Hess_g f_g$) by Perelman's first variation formula \cite{Per1}. Recall that $\nabla\lambda(\hat{g})=0$. Furthermore, note that the gradient satisfies the estimates
\begin{align}
 \norm{\nabla\lambda(g_1)-\nabla\lambda(g_2)}_{C^{0,\alpha}}&\leq C\norm{g_1-g_2}_{C^{2,\alpha}}\label{gradest1}\,\\
\norm{\nabla\lambda(g_1)-\nabla\lambda(g_2)}_{L^2}&\leq C\norm{g_1-g_2}_{W^{2,2}}\label{gradest2}
\end{align}
for all $g_1,g_2\in \sS_{\hat{g}}$. Indeed, these estimates follow from basic elliptic theory using the eigenvalue equation $(-4\Lap_g+R_g)e^{-f_g/2}=\lambda(g)e^{-f_g/2}$.
Since the gradient of the restriced functional can be obtained from the gradient of the unrestricted functional by projecting to the slice, the estimates (\ref{gradest1}) and (\ref{gradest2}) also hold for $\nabla\lambda_R$.\\

(3) The linearization of $\nabla\lambda_R$ at $\hat{g}$ is given by $L_{\hat{g}}=\frac{1}{2}\Lap_{\hat{g}}+\Rm_{\hat{g}}$, see e.g. \cite[Lem. 4.2]{Has1}.
We write $(\ker\divop_{\hat{g}})_{C^{k,\alpha}}$ for the space of $C^{k,\alpha}$ symmetric two tensors with vanishing divergence.
By ellipticity, the operator
\begin{equation}
L_{\hat{g}}:(\ker\divop_{\hat{g}})_{C^{2,\alpha}}\to (\ker\divop_{\hat{g}})_{C^{0,\alpha}}
\end{equation}
is Fredholm. Clearly, it also satisfies the estimate $\norm{L_{\hat{g}}h}_{L^2}\leq C \norm{h}_{W^{2,2}}$.\\

We can now apply \cite[Thm. 6.3]{CM12} and obtain that $\lambda_R$ satisfies the inequality $\norm{\nabla\lambda_R}\geq \abs{\lambda_R}^{1-\theta}$ for some $\theta\in(0,\frac{1}{2}]$. Together with $\norm{\nabla\lambda_R}\leq\norm{\nabla\lambda}$ and the reduction due to diffeomorphism invariance from the first paragraph this proves the theorem.
\end{proof}

\section{Dynamical stability and instability}

\begin{proof}[Proof of Theorem \ref{thm.stability}]
We write $\B_r$ for the $C^{k,\alpha}$-ball of radius $r$ around $\hat{g}$. Let $\eps>0$ such that $\B_\eps\subset \U$, $\lambda\leq 0$ in $\B_\eps$, and (\ref{eq.LS}) holds in $\B_\eps$.
We will choose $\V=\B_\delta$, where $\delta\ll\eps$ is small enough such that everything in the following works.\\

Given any $g_0\in \B_\delta$, let $T\in(0,\infty]$ be the maximal time such that the solution $g(t)$ of the Ricci flow \eqref{eq.Ricciflow} starting at $g_0$ exists for $t\in[0,T)$ and there exists a family of diffeomorphisms $\psi_t$ such that $\psi_t^\ast g(t)\in \B_\eps$ for all $t\in[0,T)$.\\

Choosing $\delta$ small enough we can assume that the Ricci-DeTurck flow (see e.g. \cite{Ham_surv}) stays in $\B_{\eps/4}$ up to time one;
in particular $T\geq 1$ and there exists a diffeomorphisms $\psi_1$ such that $\psi_1^\ast g(1)\in\B_{\eps/4}$.\\

By the definition of $T$ we have uniform curvature bounds
\begin{equation}\label{anest1}
\sup_M\abs{\Rm_{g(t)}}_{g(t)}\leq C, \qquad \forall t\in[0,T).
\end{equation} 
By standard derivative estimates (see e.g. \cite{Ham_surv}) this implies
\begin{equation}\label{anest2}
\sup_M\abs{\nabla^\ell\Rm_{g(t)}}_{g(t)}\leq C_\ell, \qquad \forall t\in[1,T).
\end{equation} 
Since $f_{g}$ solves the elliptic equation $(-4\Lap_{g}+R_g-\lambda(g))e^{-f_g/2}=0$ we also get
\begin{equation}\label{anest3}
\sup_M\abs{\nabla^\ell f_{g(t)}}_{g(t)}\leq \tilde{C}_\ell, \qquad \forall t\in[1,T).
\end{equation}
Note that the estimates (\ref{anest1}), (\ref{anest2}) and (\ref{anest3}) are diffeomorphism invariant.\\

Define $\{\varphi_t:M\to M\}_{t\in[1,T)}$ to be the family of diffeomorphisms generated by $X(t)=-\D f_{\psi_1^\ast g(t)}$ with $\varphi_1=\id_M$ and let $\{\psi_t=\psi_1\circ\varphi_t\}_{t\in[1,T)}$ with $\psi_1$ from above. Then the pulled-back metrics $\{\tilde{g}(t):=\psi_t^\ast g(t)\}_{t\in[1,T)}$ satisfy
\begin{equation}\label{eq.pullbackflow}
\dt \tilde{g}(t)=-2\big(\Rc_{\tilde{g}(t)}+\Hess_{\tilde{g}(t)}f_{\tilde{g}(t)}\big),
\end{equation}
with $\tilde{g}(1)\in\B_{\eps/4}$. Let $T'\in[1,T]$ be the maximal time such that $\tilde{g}(t)\in\B_\eps$ for all $t\in[1,T')$.
By interpolation, using the bounds (\ref{anest2}) and (\ref{anest3}), we obtained
\begin{equation}\label{interpol}
 \norm{\partial_t\tilde{g}}_{C^{k,\alpha}}\leq C \norm{\partial_t\tilde{g}}_{L^2}^{1-\eta}
\end{equation}
for $\eta>0$ as small as we want; in particular, we can assume $\sigma:=\theta-\eta+\theta\eta>0$. Using Perelman's monotonicity formula in the form
\begin{equation}
\frac{d}{dt} \lambda(\tilde{g}(t))=\norm{\Rc_{\tilde{g}(t)}+\Hess_{\tilde{g}(t)}f_{\tilde{g}(t)}}_{L^2}^{1+\eta}\,\norm{\dt \tilde{g}(t)}_{L^2}^{1-\eta},
\end{equation}
as well as the assumption $\lambda(\tilde{g}(t))\leq 0$, we obtain from Theorem~\ref{thm.LS} and \eqref{interpol}
\begin{equation}
\begin{split}
-\frac{d}{dt} \abs{\lambda(\tilde{g}(t))}^{\sigma}&=\sigma \abs{\lambda(\tilde{g}(t))}^{\sigma-1}\cdot\frac{d}{dt}\lambda(\tilde{g}(t))\\
&=\sigma \abs{\lambda(\tilde{g}(t))}^{(\theta-1)(1+\eta)}\cdot\norm{\Rc_{\tilde{g}(t)}+\Hess_{\tilde{g}(t)}f_{\tilde{g}(t)}}_{L^2}^{1+\eta}\cdot\norm{\dt \tilde{g}(t)}_{L^2}^{1-\eta}\\
&\geq \frac{\sigma}{C} \norm{\dt \tilde{g}(t)}_{C^{k,\alpha}}.
\end{split}
\end{equation}
Hence, by integration,
\begin{equation}\label{eq.disttrav}
\int_1^{T'}\norm{\dt \tilde{g}(t)}_{C^{k,\alpha}}dt \leq \tfrac{C}{\sigma}\abs{\lambda(\tilde{g}(1))}^{\sigma} \leq \tfrac{C}{\sigma}\abs{\lambda(g_0)}^{\sigma}\leq \frac{\eps}{4},
\end{equation}
provided we choose $\delta$ small enough. Thus, $T'=T=\infty$ and $\tilde{g}(t)$ converges in $C^{k,\alpha}$ to a limit $g_{\infty}$ for $t\to\infty$. Since $\nabla\lambda(g_\infty)=0$, the limit $g_\infty$ is Ricci-flat. By construction it is a limit modulo diffeomorphism of the Ricci flow.
\end{proof}

\begin{proof}[Proof of Theorem \ref{thm.instability}]
Pick a sequence of metrics $g_i\to \hat{g}$ in $C^{k,\alpha}$ with $\lambda(g_i)>0$.
Let $g_i(t)$ be the Ricci flow starting at $g_i$ and let $\tilde{g}_i(t)=\psi_t^\ast g_i(t)$ with $\{\psi_t\}$ as in the proof of Theorem \ref{thm.stability}, i.e. for $t\in[0,1]$ the pulled-back metrics solve the Ricci-DeTurck flow, while for $t\geq1 $ they solve the modified flow (\ref{eq.pullbackflow}).
Then we still have $\tilde{g}_i:=\tilde{g}_i(1)\to \hat{g}$ in $C^{k,\alpha}$ and $\lambda(\tilde{g}_i)>0$, but also uniform estimates for all higher derivatives of $\tilde{g}_i$ (with respect to the background metric $\hat{g}$). Thus, after passing to a subsequence, we get $\tilde{g}_i\to \hat{g}$ in $C^{\ell,\alpha}$ for $\ell\gg k$ as large as we want.
Moreover, for $t\geq 1$ we have uniform estimates as in (\ref{anest2}) and (\ref{anest3}).\\

Let $\eps$ be small enough such that (\ref{eq.LS}) holds in the $C^{k,\alpha}$-ball $\B_{2\eps}$ around $\hat{g}$.
On the one hand, $\tilde{g}_i(t)$ stays close to $\hat{g}$ for longer and longer times, but on the other hand, since $\lambda(g_i)>0$ the Ricci flow becomes singular eventually by Perelman's evolution inequality $\tfrac{d\lambda}{dt}\geq\tfrac{2}{n}\lambda^2$ \cite{Per1}.
Let $t_i$ be the first time when $\norm{\tilde{g}_i(t_i)-\hat{g}}_{C^{\ell,\alpha}}=\eps$. Then $t_i\to\infty$ and, always assuming $i$ is large enough,
\begin{equation}
\norm{\tilde{g}_i(t_i)-\hat{g}}_{C^{\ell,\alpha}}
\leq C\lambda(\tilde{g}_i(t_i))^{\sigma},
\end{equation}
for some $\sigma>0$, which is obtained by the same reasoning as in the proof of Theorem \ref{thm.stability}, using in particular the {\L}ojasiewicz-Simon inequality (\ref{eq.LS}).\\

Shifting time, we obtain a family of flows $\{\tilde{g}^s_i(t):=\tilde{g}_i(t+t_i)\}_{t\in[T_i,0]}$, with $T_i=1-t_i\to-\infty$, that solve the equation (\ref{eq.pullbackflow}) and satisfy
\begin{align}
&\norm{\tilde{g}^{s}_i(t)-\hat{g}}_{C^{\ell,\alpha}}\leq \eps \quad\forall t\in[T_i,0],\label{cinftybounds}\\
&\lambda(\tilde{g}^s_i(0))\geq c>0,\\
&\tilde{g}^s_i(T_i)\to \hat{g} \quad\text{in}\; C^{\ell,\alpha}.
\end{align}
After passing to a subsequence, $\{\tilde{g}^s_i(t)\}$ converges in $C^{k,\alpha}_{\mathrm{loc}}(M\times (-\infty,0])$ to an ancient solution $\{\tilde{g}(t)\}_{t\in(-\infty,0]}$ of (\ref{eq.pullbackflow}).
Define $\{\varphi_t:M\to M\}_{t\in(-\infty,0]}$ to be the family of diffeomorphisms generated by $X(t)=\D f_{\tilde{g}(t)}$ with $\varphi_0=\id_M$. Then $\{g(t):=\varphi_t^\ast \tilde{g}(t)\}$ is an ancient Ricci flow.
Since $\lambda(g(0))\geq c$, this Ricci flow is nontrivial, i.e. not a stationary solution.\\

Finally, we have to prove that for $t\to -\infty$ the Ricci flow $\{g(t)\}$ converges modulo diffeomorphism to $\hat{g}$. To this end, for $T_i\leq t$ using again in particular (\ref{eq.LS}) we estimate
\begin{align}
\norm{\hat{g}-\tilde{g}(t)}_{C^{k,\alpha}}&\leq \norm{\hat{g}-\tilde{g}^s_i(T_i)}_{C^{k,\alpha}}+\norm{\tilde{g}^s_i(T_i)-\tilde{g}^s_i(t)}_{C^{k,\alpha}}+\norm{\tilde{g}^s_i(t)-\tilde{g}(t)}_{C^{k,\alpha}}\nonumber\\
&\leq \norm{\hat{g}-\tilde{g}^s_i(T_i)}_{C^{k,\alpha}}+C\lambda({g_i(t+t_i)})^\sigma+\norm{\tilde{g}^s_i(t)-\tilde{g}(t)}_{C^{k,\alpha}}.
\end{align}
Since $\lambda(g_i(t+t_i))$ is bounded up to $t=0$ and $\tfrac{d\lambda}{dt}\geq\tfrac{2}{n}\lambda^2$, we see that $\lambda(g_i(t+t_i))$ is very small for very negative $t$. Thus, $\tilde{g}(t)=(\varphi_t^{-1})^\ast g(t) \to \hat{g}$ in $C^{k,\alpha}$ as $t\to-\infty$ and this finishes the proof of the theorem.
\end{proof}

%------------------------------------------------
\makeatletter
\def\@listi{%
  \itemsep=0pt
  \parsep=1pt
  \topsep=1pt}
\makeatother
{\fontsize{9}{10}\selectfont
}
\vspace{3mm}

Robert Haslhofer\\
{\sc Courant Institute of Mathematical Sciences, New York University, 251 Mercer Street, New York, NY 10012, USA}\\
\vspace{-1mm}

Reto M\"uller\\
{\sc Department of Mathematics, Imperial College London, 180 Queen's Gate, London SW7 2AZ, United Kingdom}\\

\end{document}